\newtheorem*{thm}{Theorem}
\newtheorem*{lem}{Lemma}
\newtheorem*{cor}{Corollary}
\theoremstyle{definition}
\newtheorem*{rem}{Remark}
\newcommand{\undd}[1]{\underline{\mathrm{d}}( #1 )}
\newcommand{\oved}[1]{\overline{\mathrm{d}}( #1 )}
\newcommand{\ovedf}[1]{{\mathrm{d}}^*( #1 )}
\newcommand{\dens}[1]{{\mathrm{d}}( #1 )}
\newcommand{\abs}[1]{\left\lvert #1 \right\rvert}
\def\F{\mathbb{F}}
\def\Z{\mathbb{Z}}
\def\N{\mathbb{N}}
\title[A simple proof of a Kneser-type theorem in $\sigma$-finite Abelian groups]{Kneser's Theorem in $\sigma$-finite Abelian groups}\author[P.-Y. Bienvenu]{Pierre-Yves Bienvenu}
\address{P.-Y. Bienvenu, Univ Lyon,  CNRS, ICJ UMR 5208, 
69622 Villeurbanne cedex, France}
\email{pierre.bienvenu@mpim-bonn.mpg.de}
\author[F. Hennecart]{Fran\c cois Hennecart}
\address{F. Hennecart, Univ Lyon, UJM-Saint-\'Etienne, CNRS, ICJ UMR 5208, 42023 Saint-\'Etienne, France}
\email{francois.hennecart@univ-st-etienne.fr}
\thanks{This work was performed within the framework of the LABEX MILYON (ANR-10-LABX-0070) of Universit\'e de Lyon, within the program ``Investissements d'Avenir" (ANR-11-IDEX-0007) operated by the French National Research Agency (ANR)}
\begin{document}
\begin{abstract}
Let $G$ be a $\sigma$-finite abelian group, i.e. $G=\bigcup_{n\geq 1} G_n$ where $(G_n)_{n\geq 1}$ is a non decreasing sequence of
finite subgroups.
For any $A\subset G$, let 
$\undd{A}:=\liminf_{n\to\infty}\frac{|A\cap G_n|}{|G_n|}$ be its lower asymptotic density.
We show that for any subsets $A$ and $B$ of $G$,
whenever $\undd{A+B}<\undd{A}+\undd{B}$,
the sumset $A+B$ must be periodic, that is, a union of translates
of a subgroup $H\leq G$ of finite index.
This is exactly analogous to Kneser's theorem regarding the density
of infinite sets of integers. Further,
we show similar statements for the upper asymptotic density in the case where $A=\pm B$.
An analagous statement had already been proven by Griesmer
in the very general context of countable abelian groups,
but the present paper provides a much simpler argument specifically tailored for the setting of $\sigma$-finite abelian groups.
This argument relies on an appeal to another theorem of Kneser, namely the one regarding finite sumsets in an abelian group. 
\end{abstract}
\maketitle

\section{Introduction and statement of the results}
Let $A$ be a subset of an abelian group $G$, $\mu$ be a measure on $G$ and $C>0$ be a real number.
Inverse results in additive number theory refer to those in which starting from the small doubling condition
$\mu(A+A)< C\mu(A)$, it is possible to deduce structural information on $A$ and its sumset $A+A$.
For finite sets $A$, the measure of $A$ can be its cardinality. More generally for infinite sets in a locally compact group,  $\mu(A)$ can be chosen as the Haar measure of $A$. For infinite sets in a discrete semigroup, we may use 
various notions of density instead of measure. 

One of the most popular inverse result is Kneser's theorem. 
In an abelian group with $\mu(\cdot)=|\cdot|$, the counting measure, and $C \le2$ it provides mainly a periodical structure for sumsets $A+B$ such that $|A+B|<|A|+|B|-1$, yielding also a partial structure for $A,B$ themselves.

In the particular semigroup $\mathbb{N}$ 
of positive integers
with $\mu=\underline{\mathrm{d}}$ being the lower asymptotic density, there exists again such a result
due to Kneser when $\underline{\mathrm{d}}(A+B)< \underline{\mathrm{d}}(A)+\underline{\mathrm{d}}(B)$ (see \cite{Kn1} or \cite{HR}); basically, $A+B$ is then a union of residue classes modulo some integer $q$. When $\mu$ is the upper asymptotic density or the upper Banach density, some structural information is still available (see \cite{Ji} and \cite{BJ} respectively).

Our goal is to investigate the validity of similar results in abelian $\sigma$-finite groups. A group $G$ is said to be
$\sigma$-finite if it is infinite and admits an \textit{exhausting sequence}, that is
a non decreasing sequence $(G_n)_{n\ge1}$ of finite subgroups such that
 $G=\bigcup_{n\ge1}G_n.$
As examples we have the polynomial ring
$\mathbb{F}_{p^r}[t]$ for any prime $p$ and integer $r\geq 1$.
More generally, let $(C_n)_{n\geq 1}$ be a sequence of finite groups, $G_\N=\prod_{n\in \N}C_n$ and $G_n=\prod_{i\leq n} C_i\leq G_\N$. Then $G=\bigcup_{n\geq 1} G_n$ is $\sigma$-finite.
Another class of $\sigma$-finite groups is given by  
the $p$-Prüfer groups
$\Z(p^\infty)=\bigcup_{r\ge1}\mathbb{U}_{p^{r}}$ where
$p$ is a prime number and $\mathbb{U}_{q}$ denotes the 
group of complex $q$-th roots of unity.
Further, if $(d_n)$ is a sequence of integers satisfying $d_n\mid d_{n+1}$ and $G_n=\mathbb{U}_{d_n}$, then $G=\bigcup_{n\geq 1} G_n$
 is $\sigma$-finite.

For any $A\subset G$, we define its lower and upper asymptotic densities as
$$
\undd{A}:=\liminf_{n\to\infty}\frac{|A\cap G_n|}{|G_n|}\quad\text{and}\quad
\oved{A}:=\limsup_{n\to\infty}\frac{|A\cap G_n|}{|G_n|},
$$
respectively. We observe that this definition implicitly requires  a particular exhausting sequence to be fixed.
If both limits coincide,  we denote by $\dens{A}$ their common value. 
This type of groups and densities were already studied in the
additive combinatorics literature;
Hamidoune and Rödseth \cite{HaR} proved that if $\langle A\rangle =G$
and $\alpha=\oved{A}>0$, then $hA=G$ for some $h=O(\alpha^{-1})$.
Hegyvári \cite{Heg} showed that 
then $h(A-A)=\langle A-A\rangle$ for some 
$h=O(\log \alpha^{-1})$, where again $\alpha=\oved{A}>0$; this was improved by Hegyvári and the second author \cite{HeHe}.

For a subset $X$ of an abelian group $G$,
 we denote by $\mathrm{Stab}_G(X)$ (or simply $\mathrm{Stab}(X)$ when no ambiguity is possible) the period or
 stabilizer of $X$, i.e. the subgroup $\{g\in G : \forall x\in X,\, g+x\in X\}$.

\begin{thm}\label{thm11}
Let $G$ be a $\sigma$-finite abelian group and $(G_n)_{n\geq 1}$ a fixed exhausting sequence. Let 
$A\subset G$ and $B\subset G$ satisfy
one of the following hypotheses:
\begin{enumerate}
\item $\undd{A+B}< \undd{A}+\undd{B}$;
\item $\oved{A+B}< \oved{A}+\oved{B}$ and $A=B$ or $A=-B$;
\item $\oved{A+B}< \oved{A}+\undd{B}$
\end{enumerate}
Then $H=\mathrm{Stab}(A+B)$ has finite index $q$ and admits a density equals to $q^{-1}$.
Moreover, the numbers $a,b,c$ of cosets of $H$ met by $A,B,A+B$ respectively satisfy
$c=a+b-1$.
\end{thm}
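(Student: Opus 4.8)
The plan is to transfer Kneser's theorem for finite sumsets to the $\sigma$-finite setting by intersecting everything with the groups $G_n$ and then passing to a limit. Writing $A_n=A\cap G_n$ and $B_n=B\cap G_n$, these are finite subsets of the finite abelian group $G_n$, and since $G_n$ is a subgroup one has $A_n+B_n\subseteq(A+B)\cap G_n$. Applying the finite Kneser theorem inside $G_n$ to $A_n$ and $B_n$, with $H_n:=\mathrm{Stab}_{G_n}(A_n+B_n)$ of index $q_n:=[G_n:H_n]$, gives
\[
\frac{|(A+B)\cap G_n|}{|G_n|}\ge\frac{|A_n+B_n|}{|G_n|}\ge\frac{|A\cap G_n|}{|G_n|}+\frac{|B\cap G_n|}{|G_n|}-\frac1{q_n},
\]
because $|A_n+H_n|\ge|A_n|$ and $|B_n+H_n|\ge|B_n|$ and $|H_n|/|G_n|=1/q_n$. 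This single finite inequality drives the whole argument.

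First I would show that the indices $q_n$ stay bounded along a suitable subsequence, for otherwise the term $1/q_n$ would vanish in the limit and the displayed inequality would contradict the density hypothesis. For (1) I take a subsequence realizing the liminf defining $\undd{A+B}$ and use $\frac{|A\cap G_n|}{|G_n|}\ge\undd A-\epsilon$ and $\frac{|B\cap G_n|}{|G_n|}\ge\undd B-\epsilon$ for large $n$; if $q_n\to\infty$ along a further subsequence one obtains $\undd{A+B}\ge\undd A+\undd B$, a contradiction. For (3) I run the same reasoning along a subsequence realizing $\oved A$, bounding the left-hand side by $\oved{A+B}+\epsilon$ and $B$ from below by $\undd B-\epsilon$. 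For (2), the coincidence $A=B$ (resp. $A=-B$, where $\oved{-A}=\oved A$ because $G_n=-G_n$) lets a single subsequence realize the upper density of both summands simultaneously, and the same dichotomy applies. In every case the conclusion is that $q_n\le Q$ for all $n$ in an infinite set $S$, and after refining $S$ I may assume $q_n\equiv q$ is constant on it.

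The main obstacle is to manufacture, from the finite stabilizers $H_n\le G_n$, a single subgroup $H\le G$ of finite index that stabilizes $A+B$. Here I would exploit that $G$ is countable and run a diagonal extraction over an enumeration of $G$, refining $S$ so that for every $g\in G$ the membership indicator $\mathbf 1[g\in H_n]$ is eventually constant along $S$; I then set $H:=\{g\in G:\ g\in H_n\text{ for all large }n\in S\}$. Closure of each $H_n$ under the group operations makes $H$ a subgroup, and a pigeonhole argument over any $q+1$ elements of $G$ (two of which lie in a common $H_n$-coset for infinitely many $n\in S$, hence in a common $H$-coset) shows $[G:H]\le q$. Finally, writing $x=a+b\in A+B$ with $a\in A$, $b\in B$, for large $n\in S$ one has $a\in A_n$ and $b\in B_n$, so $x\in A_n+B_n$; since any $h\in H$ lies in $H_n=\mathrm{Stab}_{G_n}(A_n+B_n)$ for large $n$, we get $h+x\in A_n+B_n\subseteq A+B$. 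Thus $H\le\mathrm{Stab}(A+B)$, which forces $\mathrm{Stab}(A+B)$ to have finite index and $A+B$ to be periodic.

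It remains to pin down the quotient. Set $H=\mathrm{Stab}(A+B)$, now of some finite index $q$. Choosing coset representatives and noting they lie in $G_n$ for large $n$, one checks $[G_n:H\cap G_n]=q$ eventually, whence $\dens H=q^{-1}$; likewise $A+B$, being a union of $c$ cosets of $H$, satisfies $\dens{A+B}=c/q$, while $A$ (resp. $B$) meeting $a$ (resp. $b$) cosets gives $\oved A,\undd A\le a/q$ and similarly for $B$. In the finite quotient $G/H$ the images satisfy $\overline{A+B}=\bar A+\bar B$ with trivial stabilizer, so the finite Kneser theorem yields $c\ge a+b-1$. For the reverse inequality I feed the density bounds into the relevant hypothesis: under (1), $c/q=\undd{A+B}<\undd A+\undd B\le(a+b)/q$ gives $c<a+b$, hence $c\le a+b-1$; hypotheses (2) and (3) give $c<a+b$ in the same way, using $a=b$ when $A=\pm B$. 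Combining the two bounds yields $c=a+b-1$.
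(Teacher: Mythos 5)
Your proof is correct, and it shares the paper's overall strategy --- apply the finite Kneser theorem to $A_n=A\cap G_n$ and $B_n=B\cap G_n$ inside $G_n$, then pass to the limit --- but the crucial step of producing a single finite-index subgroup of $G$ out of the finite stabilizers $H_n\le G_n$ is carried out quite differently. The paper first fixes the index $k=[G_{n_i}:H_i]$ by pigeonhole, then proves a structural lemma (every index-$k$ subgroup of $G_{n_{i+1}}$ contains an index-$k$ subgroup of $G_{n_i}$, which rests on the fact that a finite abelian group has subgroups of every order dividing its order) and runs a K\"onig-type path/pigeonhole argument to extract a nested chain $K_1\le K_2\le\cdots$ with $K_i\le H_j$ for infinitely many $j$; the union $\bigcup_i K_i$ is then the desired subgroup of $\mathrm{Stab}(A+B)$. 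You instead perform a diagonal extraction over the countable group $G$ so that $\mathbf 1[g\in H_n]$ stabilizes for every $g$ along the chosen subsequence, and take the resulting ``limit'' $H$ of the $H_n$; your pigeonhole argument that $[G:H]\le q$ and the verification that $H\le\mathrm{Stab}(A+B)$ are both sound. Your route is a pure compactness argument that avoids the abelian structure lemma altogether and is arguably shorter; the paper's chain construction yields the marginally sharper (but unused) information that $[G:\mathrm{Stab}(A+B)]$ divides the common index $k$. Your way of bounding the indices $q_n$ (directly from $|A_n+B_n|\ge|A_n|+|B_n|-|H_n|$, rather than via $c_i=a_i+b_i-1$ and a bound on $a_i+b_i$) and your endgame in the quotient $G/\mathrm{Stab}(A+B)$ match the paper's argument in substance.
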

We will refer to the statements obtained with hypothesis 1,2 or 3
as Theorem 1,2 or 3 respectively.

Let $A,B,a,b,c$ be as in Theorem 1.
Then $A+B$ admits a density which is rational, namely $c/q$.
Further, let $\epsilon= 1-\frac{\dens{A+B}}{\undd{A}+\undd{B}}$.
Then we have
$$
(1-\epsilon)\frac{a+b}{q}\geq (1-\epsilon)(\undd{A}+\undd{B})
= \dens{A+B}=\frac{a+b-1}{q}.
$$
Therefore $a+b\leq \epsilon^{-1}$.
We also get $$q\leq ((\undd{A}+\undd{B})\epsilon)^{-1}=(\undd{A}+\undd{B}-\dens{A+B})^{-1}.$$

We point out that a slightly weaker conclusion was obtained under 
a similar hypothesis with the upper Banach density
in the general setting of countable abelian groups,
by Griesmer \cite{Griesmer}.
To state his theorem, recall that 
a F\o lner sequence for an additive group $G$ is a sequence $\Phi=(F_n)_{n\ge1}$ of finite subsets in $G$ such that 
$$
\lim_{n\to \infty}\frac{|(g+F_n)\Delta F_n|}{|F_n|}=0,\quad \text{for all }g\in G,
$$
where $\Delta$ is the symmetric difference operator. We let
$$
\ovedf{A}:=\sup_{\Phi}\limsup_{n\to\infty}\frac{|A\cap F_n|}{|F_n|}
$$
be the upper Banach density of $A$, where the supremum ranges over F\o lner sequences $\Phi$.
Under the hypothesis $\ovedf{A+B}< \ovedf{A}+\ovedf{B}$,
Griesmer obtained that $\ovedf{A+B}=\ovedf{A+B+H}$ for some subgroup $H$ of finite index.
The value of the present paper resides, besides the stronger conclusion that $A+B=A+B+H$, 
in the simplicity and brevity of
our elementary argument, in contrast to Griesmer's involved, ergodic-theoretic method. We make full use of the $\sigma$-finiteness 
property in order to  reduce the problem to passing to the limit
in Kneser's theorem for finite sets.
\begin{rem}
In our theorem, the lower asymptotic density may not be replaced by the upper asymptotic nor upper Banach density without any extra hypothesis.
Indeed, let $G$ be a $\sigma$-finite abelian group. Let $(G_n)_{n\geq 0}$ be an exhausting sequence of finite subgroups
such that $\abs{G_{n+1}}/\abs{G_n}\rightarrow\infty$. 
Let $A=\bigcup_{n\geq 1} G_{2n}\setminus G_{2n-1}$
and $B=\bigcup_{n\geq 0} G_{2n+1}\setminus G_{2n}$. Then $\oved{A}=\oved{B}=1$, 
therefore $\oved{A+B}=1$ and $\ovedf{A}=\ovedf{B}=\ovedf{A+B}=1$. Yet it is easy to see that $A+B=A\cup B=G\setminus G_0$ since $G_n\setminus G_{n-1}+G_m\setminus G_{m-1}=G_n\setminus G_{n-1}$ whenever $n>m$.
Hence the stabilizer of $A+B$ is $G_0$, which is a finite group, in particular it is not of finite index. We are thankful to the anonymous referee for this construction.

However, the fact that the extra hypothesis $B=\pm A$ suffices for the upper asymptotic density Kneser theorem is in contrast with the situation in the integers, cf \cite{Ji}.
Further, there exists a $\sigma$-finite group $G$ and a set $A\subset G$ such that
$\ovedf{A+A}<2\ovedf{A}$ but $\mathrm{Stab}(A+A)$ is finite.
Here is our construction. Let $G$ be a $\sigma$-finite abelian group.
Assume that $G$ admits  an exhausting sequence  of finite subgroups $(G_n)_{n\geq 0}$ satisfying
that none of the quotients $G_{n+1}/G_n$ have exponent 2; this is the case of $G=\F_3[t]$ for instance. 
For any $n\ge1$, let $x_n\in G_{n+1}\setminus G_n$ such that $2x_n\not\in G_n$. Let
$$
A=\bigcup_{n\ge1}(x_n+G_n).
$$
Then 
$$
A+A=\bigcup_{n\ge1}(\{x_n,2x_n\}+G_n).
$$
Since the sequence $(x_n+G_n)_{n\in\N}$ is a F\o lner sequence, we see that $\ovedf{A}=\ovedf{A+A}=1$
so $\ovedf{A+A}<2\ovedf{A}$.
Now we prove that $\mathrm{Stab}(A+A)=G_0$, a finite subgroup\footnote{We may even choose it to be trivial.}.
Indeed, let $x\in G\setminus G_0$, thus $x\in G_{k+1}\setminus G_{k}$ for some $k\ge0$. For any $g\in G_k$ we have
$
x+x_k+g\in G_{k+1}\setminus G_k.
$
Assume for a contradiction that 
$x+(\{x_k,2x_k\}+G_k)\subset A+A$. In particular,
$x+x_k+g\in A+A$ and  $x+2x_k+g\in A+A$ for some $g\in G_k$. Then $x+x_k+g=x_k+g'$ or $x+x_k+g=2x_k+g'$, 
and $x+2x_k+g=2x_k+g''$ or $x+2x_k+g=x_k+g''$
for some  $g',g''\in G_k$. Since $x\not\in G_k$, it follows that 
$x+x_k+g=2x_k+g'$ and $x+2x_k+g=x_k+g''$.
Therefore,
$x=x_k+g'-g\in A$ and $x=-x_k+g''-g\in -A$. But $A\cap(-A)=\emptyset$, whence the contradiction. We infer that $x\not\in \mathrm{Stab}(A+A)$. Thus
$$
\mathrm{Stab}(A+A)\le G_0.
$$
It is easy to check that $G_0\le \mathrm{Stab}(A+A)$ thus $\mathrm{Stab}(A+A) = G_0$ as announced.
This remark shows that even the context of $\sigma$-finite groups,
a small doubling in Banach density of a set $A$ does not imply that $A+A$ is periodic, thus Griesmer's 
weaker structural conclusion is optimal.
\end{rem}
\begin{rem}
If $G$ has no proper subgroup of finite index, which is the case of 
$\Z(p^\infty)$ for instance,
the conclusion implies that $A+B=G$, 
so
$\undd{A+B}<\undd{A}+\undd{B}$ is impossible except in the trivial case
$\undd{A}+\undd{B}>1$.
\end{rem}

\section{Proofs}

We detail the proof of Theorem \ref{thm11}. Theorems 2 and 3 will be deduced straightforwardly by the same arguments.

\begin{proof}[Proof of Theorem \ref{thm11}]
Let $\alpha:=\undd{A}$ and $\beta:=\undd{B}$.
Let us assume that 
$\undd{A+B}<\alpha+\beta$. Therefore $\alpha+\beta>0$ and
there exists $\epsilon >0$ such that
$$
\undd{A+B}<(1-\epsilon)(\alpha+\beta).
$$
By definition of the lower limit, there exists an increasing sequence
$(n_i)_{i\geq 1}$ of integers such that
$$
|(A+B)\cap G_{n_i}|<(1-\epsilon/2)|G_{n_i}|(\alpha+\beta).
$$
Furthermore,
denoting $A_n=A\cap G_n$ and $B_n=B\cap G_n$,
 for any $i$ large enough, we have
$$|A_{n_i}|+|B_{n_i}|>(\alpha+\beta) |G_{n_i}|(1-\epsilon/2)/(1-\epsilon/3).$$
Combining these two bounds together with the fact that $A_{n_i}+B_{n_i}\subset (A+B)\cap G_{n_i}$, 
and reindexing the sequence $(n_i)$ if necessary, we have
\begin{equation}
\label{realstart}
|A_{n_i}+B_{n_i}|<(1-\epsilon/3)(|A_{n_i}|+|B_{n_i}|)
\end{equation}
for all $i\geq 1$. In particular $|A_{n_i}+B_{n_i}|<|A_{n_i}|+|B_{n_i}|-1$ for $i$ large enough (by reindexing if necessary, for any $i\geq 1$),
since $|A_{n_i}|+|B_{n_i}|\rightarrow \infty$ as a consequence of $\alpha+\beta>0$.

We are now able to apply
 Kneser's theorem in the finite abelian group $G_{n_i}$.
Let  $H_i=\mathrm{Stab}_{G_{n_i}}(A_{n_i}+B_{n_i})$.
We obtain that $H_i\neq \{0\}$ for each $i\geq 1$ and
letting $a_i,b_i,c_i$ be the number of cosets of $H_i$ met by $A_{n_i},B_{n_i},A_{n_i}+B_{n_i}$
respectively, we get
$c_i=a_i+b_i-1$. Reformulating inequality \eqref{realstart} in terms of these quantities yields
\begin{align*}
(1-\epsilon/3)(a_i+b_i)|H_i| 
&\ge (1-\epsilon/3)
 (|A_{n_i}|+|B_{n_i}|)\\
 &>|A_{n_i}+B_{n_i}|=
 (a_{i}+b_{i}-1)|H_{i}|
\end{align*}
from which we infer that $a_{i}+b_{i}<3/\epsilon$. Moreover,
$$
(\alpha+\beta)|G_{n_i}|/2<|A_{n_i}|+|B_{n_i}|\le  (a_{i}+b_{i})|H_{i}|
$$
so $[G_{n_i}:H_i]=|G_{n_i}|/|H_i|<\frac{a_i+b_i}{\alpha+\beta}<
6/((\alpha+\beta)\epsilon)$. 
By the pigeonhole principle,
upon extracting again a suitable subsequence of $(n_i)$, one may assume that 
$[G_{n_i}:H_i]=k$ for any $i\geq 1$ and some fixed $k<6/((\alpha+\beta)\epsilon)$.

Let us set
$$
\mathcal{A}_i=\{K< G_{n_i}\,:\, [G_{n_i}:K]=k\}.
$$
Thus $H_{i}\in  \mathcal{A}_{i}$ for any $n\geq 1$. 

%

\begin{lem}\label{lem2}
Let $i\geq 1$.
If $L\in\mathcal{A}_{i+1}$, there exists
$K\in\mathcal{A}_{i}$ such that $K\subset L$.
\end{lem}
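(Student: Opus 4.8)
The plan is to start from the most natural candidate, the intersection $K_0:=L\cap G_{n_i}$, and then correct it if necessary. Since $n_i\le n_{i+1}$ we have $G_{n_i}\le G_{n_{i+1}}$, so $K_0$ is genuinely a subgroup of $G_{n_i}$, and it is contained in $L$ by construction. The first step is to control its index. Applying the second isomorphism theorem gives $G_{n_i}/K_0\cong (G_{n_i}+L)/L$, which embeds as a subgroup of $G_{n_{i+1}}/L$, a group of order $k$. Hence $k':=[G_{n_i}:K_0]$ divides $k$.

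If $k'=k$, then $K_0\in\mathcal{A}_i$ and we are finished with $K=K_0$. The obstacle is that $k'$ may be a \emph{proper} divisor of $k$, which happens precisely when $G_{n_i}+L\neq G_{n_{i+1}}$; in that case $K_0\notin\mathcal{A}_i$ and we must shrink $K_0$ to a subgroup of the exact index $k$ in $G_{n_i}$. I would achieve this by exhibiting a subgroup of $K_0$ of index $k/k'$ within $K_0$.

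The key input making this possible is the divisibility $k\mid \abs{G_{n_i}}$, which holds because $H_i\in\mathcal{A}_i$ is a subgroup of $G_{n_i}$ of index $k$. Writing $\abs{G_{n_i}}=k'\abs{K_0}$ and using $k'\mid k$, one deduces that $k/k'$ divides $\abs{K_0}$. As $K_0$ is a finite abelian group, the structure theorem provides a subgroup $K\le K_0$ of index exactly $k/k'$ in $K_0$, since a finite abelian group admits subgroups of every order dividing its cardinality. Then $[G_{n_i}:K]=k'\cdot(k/k')=k$, so $K\in\mathcal{A}_i$, while $K\subseteq K_0\subseteq L$, which is the desired conclusion.

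I expect the only delicate point to be recognizing that the intersection $K_0$ need not already have index $k$ in $G_{n_i}$; once the index computation is carried out and this is noticed, the availability of subgroups of any prescribed index in finite abelian groups, together with the inherited condition $k\mid\abs{G_{n_i}}$, closes the gap at once.
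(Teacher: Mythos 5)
Your proof is correct and follows essentially the same route as the paper: intersect $L$ with $G_{n_i}$, compute via the second isomorphism theorem that the index $k'$ of the intersection divides $k$, and then use the fact that a finite abelian group has subgroups of every index dividing its order to pass to a subgroup of index exactly $k$. You even make explicit the same implicit ingredient the paper relies on, namely that $k\mid\lvert G_{n_i}\rvert$ because $H_i\in\mathcal{A}_i$.
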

\begin{proof}
Let $L\in\mathcal{A}_{i+1}$. Let us set $K'=L\cap G_{n_i}$. Thus
$$
G_{n_i}/K'\simeq (L+G_{n_i})/L\leq G_{n_{i+1}}/L
$$
so $[G_{n_i}:K']$ divides $k$. 
Let us write
$$
|G_{n}|=kg,\ [G_{n_i}:K']=\frac{k}{h},\ |K'|=gh
$$
for some positive integers $g$ and $h$.
Since $K'$ is abelian and $h$ divides $ |K'|$,
there exists a subgroup $K$ of $K'$ of index $h$. It satisfies $$[G_{n_i}:K]=[G_{n_i}:K']\times [K':K]=k.\qedhere
$$
\end{proof}
We draw from the lemma the following corollary
by an easy induction. 
\begin{cor}
\label{rec}
For all pairs of integers  $j>i \geq 1$, there exists a sequence of subgroups $K_\ell\in \mathcal{A}_\ell$ for $\ell\in [i,j]$ such that $K_\ell\leq K_{\ell+1}$ for all
$\ell\in [i,j)$ and $K_j=H_j$.
\end{cor}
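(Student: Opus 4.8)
The plan is to prove the corollary by a short downward induction on the index $\ell$, simply iterating Lemma \ref{lem2} step by step. Fix a pair $j>i\geq 1$. The idea is to anchor the chain at its top with $K_j=H_j$ and then descend one level at a time, at each stage using Lemma \ref{lem2} to produce a subgroup of the correct index sitting inside the one already constructed.

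First I would set $K_j=H_j$; since $[G_{n_j}:H_j]=k$ by the construction preceding the lemma, this gives $K_j\in\mathcal{A}_j$, so the top of the chain is in place. For the inductive step, suppose that for some $\ell$ with $i\leq \ell<j$ the subgroup $K_{\ell+1}\in\mathcal{A}_{\ell+1}$ has already been produced. Applying Lemma \ref{lem2} with $\ell$ in the role of the index $i$ of the lemma and with $L=K_{\ell+1}$, I obtain a subgroup $K_\ell\in\mathcal{A}_\ell$ satisfying $K_\ell\subset K_{\ell+1}$. Iterating this step until $\ell=i$ yields the full nested sequence $K_i\leq K_{i+1}\leq\cdots\leq K_j=H_j$ with $K_\ell\in\mathcal{A}_\ell$ for every $\ell\in[i,j]$, which is precisely the assertion.

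I do not expect any genuine obstacle here, as the corollary is merely a finite iteration of the lemma, whose content carries the whole argument. The only point deserving care is to run the induction \emph{downward}, from $j$ to $i$: Lemma \ref{lem2} takes a member of $\mathcal{A}_{i+1}$ and returns a smaller subgroup in $\mathcal{A}_i$, not the reverse, so the chain must be built from the top down. Anchoring at $K_j=H_j$ rather than at the bottom is therefore essential for the conclusion $K_j=H_j$ to hold.
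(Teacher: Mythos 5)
Your proposal is correct and is exactly the "easy induction" the paper invokes: anchor the chain at $K_j=H_j\in\mathcal{A}_j$ and descend one level at a time via Lemma \ref{lem2}. The observation that the induction must run downward (since the lemma produces a subgroup at the lower level from one at the upper level) is the only point of substance, and you handle it correctly.
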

Borrowing terminology from   graph theory,
for 
$K\in \mathcal{A}_i$ and
$L\in\mathcal{A}_j$ where $i\leq j$, we call a \textit{path} from $K$ to  $L$ any non decreasing sequence of subgroups $K_\ell\in\mathcal{A}_\ell,\ell\in [i,j]$ for which $K_i=K$ and $K_j=L$.
With this terminology, the conclusion of the corollary 
is that there exists a path from $K_i$ to $H_j$.

We shall construct inductively a non decreasing subsequence  of subgroups $K_i\in \mathcal{A}_{i}$
for $i\geq 1$ such that for any $i$, the set  of integers $j\geq i$ for which there exists a path from
$K_i$ to  $H_j$ is infinite.

To construct $K_1\in\mathcal{A}_1$ with the desired property, let us observe that $\mathcal{A}_1$ is finite and  invoke  the corollary 
and the pigeonhole principle.

Suppose that $K_1,\ldots,K_i$ are already constructed for some $i\geq 1$; one constructs 
$K_{i+1}$ by observing again that $\mathcal{A}_{i+1}$ is finite and applying the pigeonhole principle. Indeed, there 
are infinitely many $j>i$ such that a path from $K_i$ to $H_j$ exists, but only finitely many $K\in \mathcal{A}_{i+1}$ through
which these paths may pass, so there exists $K\in \mathcal{A}_{i+1}$ having the property that there exist 
infinitely many $j> i$ such that a path from $K_i$ to $H_j$ over $K\in \mathcal{A}_{i+1}$ exists.
One then selects $K_{i+1}$ to be such a subgroup $K$.

The sequence $(K_i)_{i\geq 1}$ being non decreasing,
the union
$$
K=\bigcup_{i\ge 1}K_{i}
$$
is a subgroup of $G$.
In fact, $K\leq\mathrm{Stab}(A+B)=:H$; indeed, let $g\in K$
and
$x=a+b\in A+B$ where $(a,b)\in A\times B$.
Let $i$ satisfy $g\in K_i$ and $x\in A_{n_i}+B_{n_i}$.
Since $K_i$ is included in  $H_j$ for infinitely many $j\geq 1$, there exists in particular some $j\geq i$ for which $K_i\leq H_j$. Thus $x+g\in A_{n_j}+B_{n_j}$
so $g\in H$.

Since $K_{i}\subset H\cap G_{n_i}$, the indices $[G_{n_i}:H\cap G_{n_i}]$ divide
 $k=[G_{n_i}:K_{i}]$. Now
$$
G_{n_i}/(H\cap G_{n_i})\simeq(G_{n_i}+H)/H,
$$
and $((G_{n_i}+H)/H)_{i\in\N}$ is a non decreasing sequence of subgroups of bounded
indices of $G/H$, whose union is $G/H$.
It is therefore stationary, so  $(G_{n_i}+H)/H=G/H$
for any large enough $i$.
 It follows that
$[G:H]$ divides $k$,
in particular $ [G:H]\le k <6/((\alpha+\beta)\epsilon).$


Furthermore, if $i$ is large enough, we have
$$
\frac{\abs{H\cap G_{n_i}}}{|G_{n_i}|}= \abs{(G_{n_i}+H)/H}^{-1}=\abs{G/H}^{-1}
$$
whence we deduce that $\dens{H}$ exists and equals $q^{-1}=[G:H]^{-1}$.

Let $C,D$ and $F$ be the projections of $A,B$ and $A+B$, respectively, in $G/H$. Then $F=C+D$.
In view of the previous paragraph and the inequality $\undd{A+B}<\undd{A}+\undd{B}$,
we obtain that $|C+D|<|C|+|D|$.
Now $|C+D|\geq |C|+|D|-1$, since otherwise by Kneser's theorem applied in the finite  abelian group $G/H$, the set
$F$ would admit a non trivial period and $\mathrm{Stab}(A+B)$ would be strictly larger than $H$. Therefore $|C+D|= |C|+|D|-1$, 
and we conclude.
\end{proof}

We now prove the same result with the upper asymptotic density in place of the lower one in the case $B=\pm A$, that is, Theorem 2.
Let $\alpha:=\oved{A}=\oved{B}=\beta$ and suppose that 
$\oved{A+B}<2\alpha(1-\epsilon)$ for some $\epsilon >0$.
Then on the one hand we have for any large enough $n$
$$
|A_n+B_n|\le |(A+B)\cap G_n|<2\alpha(1-\epsilon/2)|G_n|.
$$
On the other hand, since $\abs{A_n}=\abs{B_n}$ for all $n$,  there exist infinitely many $n$ such that 
\begin{equation}\label{fails}
|A_n|+|B_n|>\frac{1-\epsilon/2}{1-\epsilon/3}(\alpha+\beta)|G_n|.
\end{equation}
Hence equation \eqref{realstart} holds along some increasing sequence $(n_i)$.
From there we are in position to conclude as in the proof of Theorem \ref{thm11}.
The reader may observe that equation \eqref{fails} fails for general $A$ and $B$.

Finally we prove Theorem 3.
Let $\alpha:=\oved{A}$ and $\beta=\undd{B}$ and suppose that 
$\oved{A+B}<(\alpha+\beta)(1-\epsilon)$ for some $\epsilon >0$.
Then on the one hand we have for any large enough $n$
$$
|A_n+B_n|\le |(A+B)\cap G_n|<(\alpha+\beta)(1-\epsilon/2)|G_n|
$$
and
$$
|B_n|>\frac{1-\epsilon/2}{1-\epsilon/3}\beta|G_n|.
$$
On the other hand
there exist infinitely many $n$ such that 
\begin{equation*}
|A_n|>\frac{1-\epsilon/2}{1-\epsilon/3}\alpha|G_n|.
\end{equation*}
Hence again  we recover equation \eqref{realstart} along some subsequence and can conclude.

\end{document}